\newtheorem{theorem}{Theorem}%[subsection]
\newtheorem{proposition}[theorem]{Proposition}
\newtheorem{corollary}[theorem]{Corollary}
\newtheorem{lemma}[theorem]{Lemma}
\newtheorem{rem}[theorem]{Remark}
\newtheorem{defn}[theorem]{Definition}
\newtheorem{ex}[theorem]{Example}
\newtheorem{assumption}[theorem]{Assumption}
\newenvironment{definition}{\begin{defn}\em}{\end{defn}}
\newenvironment{example}{\begin{ex}\em}{\end{ex}}
\newenvironment{proof}{{\noindent\bf Proof.\ }}{\qed}
\newenvironment{proofThmMain}{{\noindent\bf Proof of Theorem~\ref{thm:MainThm}.\ }}{\qed}
\newenvironment{proofThmMainV}{{\noindent\bf Proof of Theorem~\ref{thm:MainThmV}.\ }}{\qed}
\newcommand{\bN}{{\mathbb N}}
\newcommand{\bC}{{\mathbb C}}
\newcommand{\bP}{{\mathbb P}}
\newcommand{\bR}{{\mathbb R}}
\newcommand{\bZ}{{\mathbb Z}}
\newcommand\qed{{\hspace*{\fill}$\Box$\vskip12pt plus 1pt}}
\newcommand\sF{{\mathcal F}}
\newcommand\sG{{\mathcal G}}
\newcommand\sL{{\mathcal L}}
\newcommand\sM{{\mathcal M}}
\newcommand\sU{{\mathcal U}}
\newcommand\sV{{\mathcal V}}
\newcommand\sW{{\mathcal W}}
\newcommand\Var{\sV}
\newcommand\Sing{{\rm Sing}}
\newcommand{\grad}{{\nabla}}
\newcommand{\Real}{{\bf RealPoints}}
\newcommand{\RealComp}{{\bf RealPointsComponent}}
\def\rank{{\mathop{\rm rank~}\nolimits}}
\def\Null{{\mathop{\rm null~}\nolimits}}
\newcommand{\param}{{\rm param}}
\renewcommand{\prod}{{\rm prod}}
\begin{document}

\title{Numerically computing real points on algebraic sets}

\author{Jonathan D. Hauenstein\thanks{Department of Mathematics, Mailstop 3368,
Texas A\&M University, College Station, TX 77843 (jhauenst@math.tamu.edu,
{\tt www.math.tamu.edu/$\sim$jhauenst}).
This author was supported by Texas A\&M University,
Institut Mittag-Leffler (Djursholm, Sweden), NSF grants DMS-0915211
and DMS-1114336, and DOE ASCR grant DE-SC0002505.}}

\date{February 25, 2012}

\maketitle

\begin{abstract}

\noindent Given a polynomial system $f$, a fundamental question
is to determine if $f$ has real roots.  Many algorithms involving
the use of infinitesimal deformations have been proposed to answer this question.
In this article, we transform an approach of Rouillier, Roy, and Safey El Din,
which is based on a classical optimization approach of Seidenberg, to develop a homotopy
based approach for computing at least one point on each connected component
of a real algebraic set.  Examples are presented demonstrating the effectiveness of
this parallelizable homotopy based approach.

\noindent {\bf Key words and phrases.}
real algebraic geometry, infinitesimal deformation, homotopy,
numerical algebraic geometry, polynomial system

\noindent {\bf 2010 Mathematics Subject Classification.}
Primary 65H10; Secondary 13P05, 14Q99, 68W30.
% 65H10 Systems of nonlinear equations
% 68W30 Symbolic Computation and algebraic computation
% 14Q99 Computational aspects in algebraic geometry
% 13P05 Polynomials, factorization
\end{abstract}
\small
\normalsize
\section{Introduction}

Computing real roots of a polynomial system is a difficult and extremely important
problem.  In many applications in science, engineering, and economics, the real roots
are the only ones of interest.  Due to the importance of this problem,
many approaches have been proposed.  Two approaches are the
cylindrical algebraic decomposition algorithm \cite{Collins}
and so-called critical point methods, such as 
Seidenberg's approach of computing critical points of the distance function \cite{Seidenberg}.
The cylindrical algebraic decomposition algorithm
has doubly exponential complexity in the number of variables.
However, using the idea of Seidenberg and related ideas developed
in \cite{BPR,Canny,GV88,GV92,HRS,R92}, algorithms with asymptotically optimal complexity estimates
for computing at least one real point on each connected component of a real algebraic set
were developed.  Other related approaches for computing real roots
are presented in \cite{RealSolving,BGHM,BGHSS,Real} and the references therein.
The approach presented here will transform the algorithms presented in \cite{RealSolving,Real}
into a homotopy based algorithm.

Several homotopy based algorithms have been proposed to compute real roots of a polynomial system.
The algorithms in \cite{RealCurve} and \cite{RealSurface} utilize critical point methods
to decompose the real points of a complex curve and a complex surface with finitely many
singularities, respectively.  An algorithm for directly computing only the
real roots that are isolated over the complex numbers is presented in \cite{Fewnomials}.
The complexity of this approach depends upon the fewnomial structure of the given polynomial system.
The approach presented below is not restricted to low-dimensional cases and the real roots
are not assumed to be isolated over the complex numbers.

Two other nonhomotopy based algorithms are presented in \cite{RealRadical} and \cite{RealPositive}.
The approach in \cite{RealRadical} (see also \cite{NumRealRadicalSummary})
uses semidefinite programming for computing real roots.
This algorithm computes every real root assuming the number of real roots is finite.
The approach in \cite{RealPositive} uses tools related to
maximum likelihood estimation in statistics for computing real positive roots
of certain types of polynomial systems.

The rest of the article is structured as follows.
The remainder of this section describes the needed concepts from
complex, real, and numerical algebraic geometry and a brief introduction to Puiseux series.
Section~\ref{Sec:Real} describes the homotopy based approach
with examples demonstrating the algorithm in Section~\ref{Sec:Examples}.

\subsection{Algebraic sets and genericity}\label{Sec:AlgebraicSets}

Let $f:\bC^N\rightarrow\bC^n$ be a polynomial system and $\Var(f) = \{x\in\bC^N~|~f(x) = 0\}$.
The set $\Var(f)\subset\bC^N$ is called the {\em algebraic set associated to $f$}.
A set $X\subset\bC^N$ is called an {\em algebraic set} if there exists a
polynomial system $g:\bC^N\rightarrow\bC^m$ such that $X = \Var(g)$.
An algebraic set $X\subset\bC^N$ is {\em reducible} if there exists
algebraic sets $Y,Z\subset\bC^N$, which are proper subsets of $X$,
such that $X = Y \cup Z$.  An algebraic set is {\em irreducible}
if it is not reducible.  For an irreducible algebraic set $X$, the subset of
manifold points $X_{\rm reg}$ is dense in $X$, open, and connected.  The {\em dimension} of an irreducible
algebraic set $X$ is the dimension of $X_{\rm reg}$ as a complex manifold.

On irreducible algebraic sets, we can define the notion of genericity.

\begin{definition}\label{Def:Genericity}
Let $X\subset\bC^N$ be an irreducible algebraic set.  A property $P$ is said
to hold {\em generically} on $X$ if the subset of points in $X$ which
do not satisfy $P$ are contained in a proper algebraic subset of $X$.
That is, there is a nonempty Zariski open subset $U$ of $X$ such that $P$ holds
at every point in $U$.  Each point in $U$ is called a {\em generic point} of $X$
with respect to $P$.
\end{definition}

Since every proper algebraic subset of $\bC$ is a finite set, a property $P$ holds
generically on $\bC$ if $P$ holds at all but finitely many points in $\bC$.

Every algebraic set $X$ can be written uniquely (up to reordering)
as the finite union of inclusion maximal irreducible algebraic sets,
called the {\em irreducible decomposition} of $X$.  That is,
there are irreducible algebraic sets $A_1,\dots,A_k$ such that
$$X = \bigcup_{i=1}^k A_i \hbox{~~and~~} A_i \not\subset A_j \hbox{~for~} i\neq j.$$
Each $A_i$ is called an {\em irreducible component} of $X$.

The dimension of an algebraic set is the maximum dimension of its irreducible components.
An algebraic set is called {\em pure-dimensional} if each irreducible component
has the same dimension.  The {\em pure $i$-dimensional component} of an algebraic set
is the union of the irreducible components of dimension $i$.  In summary, the
algebraic set $\Var(f)$ has an irreducible decomposition of the form
\begin{equation}\label{Eq:IrredDecomp}
\Var(f) = \bigcup_{i=0}^{\dim\Var(f)} V_i = \bigcup_{i=0}^{\dim\Var(f)} \bigcup_{j=1}^{k_i} V_{i,j}
\end{equation}
where $V_i$ is the pure $i$-dimensional component of $\Var(f)$ and
each $V_{i,j}$ is a distinct $i$-dimensional irreducible component.

\subsection{Decomposition of real algebraic sets}\label{Sec:ConnectedComponents}

A {\em real algebraic set} are subsets of $\bR^N$ which arise as the intersection of algebraic
sets in $\bC^N$ with $\bR^N$.  That is, a set $X\subset\bR^N$ is a real algebraic set if
there is an algebraic set $Y\subset\bC^N$ such that $X = Y \cap\bR^N$.
For a polynomial system $f:\bR^N\rightarrow\bR^n$, the {\em real algebraic set associated to $f$} is
$\Var_\bR(f) = \Var(f)\cap\bR^N = \{x\in\bR^N~|~f(x) = 0\}$.

Consider the algebraic set $X = \Var(y^2 - x^2(x-1))\subset\bC^2$.  It is easy to verify
that $X$ is an irreducible algebraic set and, hence, both $X$ and $X_{\rm reg}$ are connected.
However, the real algebraic set $X\cap\bR^2$ is not connected.
This example suggests that we should consider decomposing
real algebraic sets into connected components.

A real algebraic set $X\subset\bR^N$ can be written uniquely (up to reordering) as the
disjoint union of finitely many path-connected sets $C_1,\dots,C_\ell\subset\bR^N$
such that $C_i$ and $V\setminus C_i$ are both closed in the Euclidean topology on $\bR^N$.
Each $C_i$ is called a {\em connected component} of $X$ and one can verify that
it is a semi-algebraic set.  Expanded details regarding real algebraic sets and decompositions can be found in \cite{Basu,BCR}.

To demonstrate this decomposition and contrast it with the irreducible decomposition of algebraic sets,
consider the algebraic sets $X = \Var(y^2 - x^2(x-1))$, $Y = \Var(x-y)$,
and $Z = X \cup Y$ with corresponding real algebraic sets $X_\bR = X\cap\bR^2$, $Y_\bR = Y\cap\bR^2$, and $Z_\bR = Z\cap\bR^2$.
It is easy to verify that $X$ and $Y$ are irreducible algebraic sets with $Z$ clearly being a reducible algebraic set.
The set $X_\bR$ consists of two connected components, namely $C_1 = \{(0,0)\}$ and a connected curve $C_2 = Y_\bR \setminus C_1$.
Since the real algebraic sets $Y_\bR$ and $Z_\bR$ are connected, $Y_\bR$ and $Z_\bR$ each have only one connected component.

\subsection{Puiseux series}\label{Sec:Puiseux}

Since we will utilize Puiseux series in Section~\ref{Sec:Real},
we will provide a brief review here.
For more detailed information, see \cite{Basu}.

The field of algebraic Puiseux series over $\bC$ is
$$\bC\langle\epsilon\rangle = \left.\left\{\sum_{j\geq j_0} a_j \epsilon^{j/q}~\right|~j_0\in\bZ, q\in\bN, a_j\in\bC
\hbox{~with~}a_{j_0}\neq 0 \right\}.$$
To simplify the notation, we shall define $a_j = 0$ for all $j < j_0$.
An element in $\bC\langle\epsilon\rangle$ is {\em bounded} if $j_0 \geq 0$
and {\em infinitesimal} if $j_0 > 0$.
The subset consisting of bounded elements, denoted $\bC_b\langle\epsilon\rangle$,
is a ring which is naturally mapped to $\bC$ by the ring homomorphism $\lim_0$ defined by
$$\hbox{$\lim_0$} \sum_{j\geq j_0} a_j \epsilon^{j/q} = a_0.$$

\subsection{Numerical irreducible decomposition and witness sets}\label{Sec:WitnessSets}

Let $f:\bC^N\rightarrow\bC^n$ be a polynomial system.
A numerical irreducible decomposition of $\Var(f)$, first presented in \cite{NAG},
is a numerical decomposition analogous to (\ref{Eq:IrredDecomp}) using witness sets
(see \cite[Chaps. 12-15]{SW05} for more expanded details).
Suppose that $V$ is the pure $i$-dimensional component of $\Var(f)$ with $d = \deg V$.
For a fixed generic $i$-codimensional linear space $H\subset\bC^N$, we have that
$V\cap H$ consists of $d$ points.  Let $L:\bC^N\rightarrow\bC^i$ be a system of linear polynomials such
that $\Var(L) = H$.  The set $V\cap\Var(L) = V\cap H$ is called a
{\em witness point set} for $V$ with the triple
$\sW = \{f,L,V\cap\Var(L)\}$ called a {\em witness set} for $V$.
A {\em numerical irreducible decomposition} of $\Var(f)$ is of the form
\begin{equation}\label{Eq:NumIrredDecomp}
\bigcup_{i=0}^{\dim\Var(f)} \bigcup_{j=1}^{k_i} \sW_{i,j}
\end{equation}
where $\sW_{i,j}$ is a witness set for a distinct $i$-dimensional irreducible component of $\Var(f)$.
We note that the union of witness sets in (\ref{Eq:NumIrredDecomp}) should be considered
as a formal union.  Numerical irreducible decompositions can be computed using the algorithms
presented in \cite{LocalDim,RegenCascade,Cascade,SVW01a,SVW01b,SVW02b,SVW01c,NAG}.

\subsection{Trackable paths}\label{Sec:Trackable}

Numerical homotopy methods rely on the ability to construct
homotopies with solution paths that are {\em trackable}.  The
following is the definition of a trackable solution path starting
at a nonsingular point adapted from \cite{Regen}.

\begin{definition}\label{def:trackable}
Let $H(x,t):\bC^N\times\bC\rightarrow\bC^N$ be polynomial in $x$
and complex analytic in $t$ and let $x^*$ be a nonsingular isolated solution
of $H(x,1) = 0$.  We say that $x^*$ is {\em trackable} for $t\in(0,1]$ from $t = 1$
to $t = 0$ using $H(x,t)$ if there is a smooth map $\xi_{x^*}:(0,1]\rightarrow\bC^N$ such that
$\xi_{x^*}(1) = x^*$ and, for $t\in(0,1]$, $\xi_{x^*}(t)$ is a nonsingular isolated solution of $H(x,t) = 0$.
\end{definition}

The solution path starting at $x^*$ is said to {\em converge} if
$\lim_{t\rightarrow 0^+} \xi_{x^*}(t)\in\bC^N$, where
$\lim_{t\rightarrow 0^+} \xi_{x^*}(t)$ is called the {\em endpoint} (or {\em limit point}) of the path.

\section{Real points on an algebraic set}\label{Sec:Real}

Let $f:\bR^N\rightarrow\bR^n$ be a polynomial system and $V\subset\Var_\bC(f)$ be a
pure $d$-dimensional algebraic set.  The main problem we consider is, given a witness set
$\{f,\sL,W\}$ for $V$, compute a finite set of points which contains
at least one point on each connected component of $\Var_\bR(f)$ contained
in $V$.  We note that if $d = 0$, then $V = W$ so that one can compute
the real points in $V$ simply by considering the
finitely many points in $W$.  Hence, we will assume that $d > 0$.

We will also reduce to the case $n = N - d$.  One way to
always reduce down to this case is to consider the polynomial $g$ which
is the sum of squares of $f$, that is, $g = f_1^2 + \cdots + f_n^2$,
with $V_g = \Var(g)$.  If $T$ is a finite set of points which contains at
least one point on each connected component of $\Var_\bR(g)$,
then $T\cap V$ contains at least one point on each connected
component of $\Var_\bR(f) = \Var_\bR(g)$ contained in $V$.
The set $T\cap V$ can be computed from $T$ and
a witness set for $V$ using the homotopy membership test \cite{SVW01b}.

We summarize the assumptions in the following statement.

\begin{assumption}\label{assume:Basic}
Let $N > d > 0$, $f:\bR^N\rightarrow\bR^{N-d}$ be a polynomial system, and
$V\subset\Var(f)$ be a pure $d$-dimensional algebraic set with witness set $\{f,\sL,W\}$.
\end{assumption}

The following lemma considers the solutions of $f(x) = z$ for $z\in\bC^{N-d}$.

\begin{lemma}\label{lemma:PerturbF}
With Assumption~\ref{assume:Basic}, there is a nonempty Zariski open set $Z\subset\bC^{N-d}$
such that, for every $z\in Z$, $\Var(f - z)$ is a smooth algebraic set of dimension $d$.
\end{lemma}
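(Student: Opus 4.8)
The plan is to use a generic-smoothness (generic-flatness) argument applied to the map $f:\bC^N\to\bC^{N-d}$, viewed as a family of fibers parametrized by the target. First I would observe that $V$, being a pure $d$-dimensional component of $\Var(f)$, satisfies $d = N - (N-d)$, so a fiber of $f$ over a generic point of $\bC^{N-d}$ has the expected dimension $d$; I want to promote ``expected dimension'' to ``smooth of dimension $d$.'' To do this cleanly, consider the incidence variety $\Gamma = \{(x,z)\in\bC^N\times\bC^{N-d} : f(x) = z\}$, which is simply the graph of $f$ and hence isomorphic to $\bC^N$ via the first projection, so in particular $\Gamma$ is smooth and irreducible of dimension $N$. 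The fiber of the second projection $\pi:\Gamma\to\bC^{N-d}$ over $z$ is exactly $\Var(f-z)$.

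The key step is then generic smoothness: since $\Gamma \cong \bC^N$ is a smooth variety and the base $\bC^{N-d}$ is reduced (indeed smooth), there is a nonempty Zariski open set $Z_1\subset\bC^{N-d}$ such that $\pi^{-1}(Z_1)\to Z_1$ is a smooth morphism (this is the Bertini--Sard theorem over $\bC$, or generic smoothness of a dominant morphism from a smooth variety in characteristic zero; one must first check $\pi$ is dominant, which holds because $\pi$ is surjective as $f$ is a polynomial map defined on all of $\bC^N$). Smoothness of the morphism $\pi$ over $Z_1$ implies each fiber $\Var(f-z)$ for $z\in Z_1$ is a smooth algebraic set. To pin down the dimension, I would note that the generic fiber dimension equals $\dim\Gamma - \dim\overline{\pi(\Gamma)} = N - (N-d) = d$; since the fiber dimension is upper semicontinuous, there is a further nonempty Zariski open set $Z_2\subset\bC^{N-d}$ on which every fiber has dimension exactly $d$. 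Taking $Z = Z_1\cap Z_2$, which is nonempty and Zariski open since $\bC^{N-d}$ is irreducible, gives the claim: for every $z\in Z$, $\Var(f-z)$ is smooth of pure dimension $d$.

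An alternative, more hands-on route avoids citing generic smoothness and instead works directly with the Jacobian: the singular locus of $\Var(f-z)$ is contained in $\{x : \rank Df(x) < N-d\}$, and one shows that for generic $z$ the fiber $\Var(f-z)$ misses the proper subvariety $\{x : \rank Df(x) < N-d\}$ of $\bC^N$; this is a standard consequence of the fact that a generic fiber of $f$ avoids any fixed proper subvariety of the source not dominating the base. One would also need to rule out the case $\Var(f-z)=\emptyset$, but this is automatic since $f$ is surjective onto its image which is Zariski dense.

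I expect the main obstacle to be bookkeeping about dimension rather than smoothness per se: the statement asserts dimension exactly $d$ (not merely $\le d$ or ``each component has dimension $d$''), so the argument must genuinely use the equality $n = N-d$ together with surjectivity of $f$ onto a dense set, and one must be careful that the generic fiber is nonempty and equidimensional. The smoothness itself follows formally from generic smoothness once $\Gamma$ is identified as the graph; the only subtlety there is confirming $\pi$ is dominant, which, as noted, is immediate.
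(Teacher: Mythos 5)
Your overall strategy --- pass to the graph $\Gamma=\{(x,z):f(x)=z\}\cong\bC^N$, apply generic smoothness in characteristic zero to the projection $\pi:\Gamma\rightarrow\bC^{N-d}$, and use semicontinuity of fiber dimension --- is sound, and it is essentially what the paper's one-line appeal to Lemma~13.4.1 of \cite{SW05} encapsulates. However, there is a genuine gap at the step you dismiss as immediate: the dominance of $\pi$, i.e. that $\overline{f(\bC^N)}=\bC^{N-d}$. Your justification, ``$\pi$ is surjective as $f$ is a polynomial map defined on all of $\bC^N$,'' is not valid: a polynomial map $\bC^N\rightarrow\bC^{N-d}$ need not be surjective, nor even dominant; its image closure can be a proper subvariety (already $f(x_1,x_2,x_3)=(x_1,x_1)$ shows the inference from ``defined everywhere'' to ``dense image'' fails). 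If $\overline{f(\bC^N)}$ were a proper subvariety of $\bC^{N-d}$, then for generic $z\in\bC^{N-d}$ the fiber $\Var(f-z)$ would be empty, and the conclusion of the lemma would be false --- so dominance is the crux, not a side-check. The same unproven assertion reappears at the end of your argument (``$f$ is surjective onto its image which is Zariski dense'') and also undermines your alternative Jacobian route: if $f$ were not dominant, then $\rank Jf(x)<N-d$ at \emph{every} $x$, so the set you want the generic fiber to avoid would be all of $\bC^N$. Note that nowhere do you actually use the hypothesis of Assumption~\ref{assume:Basic} that $\Var(f)$ contains a pure $d$-dimensional component, which is exactly what forces dominance.

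The missing ingredient is the content of the paper's proof: set $r=\dim\overline{f(\bC^N)}$, so trivially $r\leq N-d$. By the fiber-dimension lower bound (Theorem~13.4.2 of \cite{SW05}), every irreducible component of every fiber of $f$ --- in particular the $d$-dimensional component $V\subset\Var(f)=f^{-1}(0)$ --- has dimension at least $N-r$, whence $d\geq N-r$, i.e. $r\geq N-d$. Therefore $r=N-d$ and $f$ is dominant. Once this is inserted, your graph/generic-smoothness argument together with the semicontinuity bookkeeping (generic fiber dimension $\dim\Gamma-\dim\overline{\pi(\Gamma)}=d$, with the lower bound $d$ on each component of a nonempty fiber) does yield the lemma, and at that point it coincides in substance with the paper's citation of Lemma~13.4.1 of \cite{SW05}.
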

\begin{proof}
Let $r = \dim \overline{f(\bC^N)}$ and $c = N - r$, which are called
{\em rank of $f$} and the {\em corank of $f$} respectively \cite[\S 13.4]{SW05}.
Since $\overline{f(\bC^N)}\subset\bC^{N-d}$, we have $r \leq N-d$ and hence $d \leq N - r = c$.
Since $\Var(f)$ has a component of dimension $d$, Theorem~13.4.2 of \cite{SW05}
yields that $d\geq c$.  Therefore, $c = d$ and $r = N - d$.
The lemma now follows immediately from Lemma~13.4.1 of \cite{SW05}.
\end{proof}

Lemma~\ref{lemma:PerturbF} permits the use of continuation techniques as
stated in the following theorem.

\begin{theorem}\label{thm:MainThmV}
Suppose that Assumption~\ref{assume:Basic} holds.
Let $z\in\bR^{N-d}$, $\gamma\in\bC$, $y\in\bR^N\setminus\Var_\bR(f)$, $\alpha\in\bC^{N-d+1}$, and
$H:\bC^N\times\bC^{N-d+1}\times\bC\rightarrow\bC^{2N-d+1}$ be the homotopy defined by
\begin{equation}\label{eq:HomotopyV}
H(x,\lambda,t) = \left[\begin{array}{c}
 f(x) - t \gamma z \\ \lambda_0(x-y) + \lambda_1 \grad f_1(x)^T + \cdots + \lambda_{N-d} \grad f_{N-d}(x)^T \\
 \alpha_0 \lambda_0 + \cdots + \alpha_{N-d} \lambda_{N-d} - 1 \end{array}\right]
 \end{equation}
where $f(x) = [f_1(x),\dots,f_{N-d}(x)]^T$ such that following statements hold.
\begin{enumerate}
\item\label{item:Main1V} The set $S\subset\bC^N\times\bC^{N-d+1}$ of roots of $H(x,\lambda,1)$ is finite and
    each is a nonsingular solution of $H(x,\lambda,1) = 0$.
\item\label{item:Main2V} The number of points in $S$ is equal to the maximum number of isolated solutions of $H(x,\lambda,1) = 0$ as
     $z$, $\gamma$, $y$, and $\alpha$ vary over sets $\bC^{N-d}$, $\bC$, $\bC^N$, and $\bC^{N-d+1}$, respectively.
\item\label{item:Main3V} The solution paths defined by $H$ starting, with $t = 1$, at the points in $S$
    are trackable.
\item\label{item:Main4V} If $\pi(x,\lambda) = x$,
    $$\begin{array}{lcll}
    E &=& \left\{\left.\lim_{t\rightarrow0^+} \xi_s(t)~\right|~s\in S \hbox{~and the solution path~}\xi_s \hbox{~converges}\right\}, & \hbox{and} \\
    E_1 &=& \left\{\left.\lim_{t\rightarrow0^+} \pi(\xi_s(t))~\right|~s\in S \hbox{~and the path~}\pi(\xi_s) \hbox{~converges}\right\}, & \end{array}$$
    we have $E_1 = \pi(E)$.
\end{enumerate}
Then, $E_1\cap V\cap\bR^N$ contains a point on each connected component of $\Var_\bR(f)$ contained in $V$.
\end{theorem}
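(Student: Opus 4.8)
The plan is to run the classical Seidenberg / Rouillier--Roy--Safey El Din distance-function argument, realizing each infinitesimal deformation in it through the homotopy $H$ and its Puiseux-series limits. Write $\rho_y(x)=\sum_{i=1}^N(x_i-y_i)^2$ for the squared Euclidean distance to $y$ (a genuine distance, as $y$ is real), and for $w\in\bC^{N-d}$ let $\mathrm{Crit}(w)\subset\bC^N$ denote the set of critical points of $\rho_y$ on the smooth locus of $\Var(f-w)$ together with the singular points of $\Var(f-w)$. Inspecting (\ref{eq:HomotopyV}) shows that $\pi$ carries the zero set of $H(\cdot,\cdot,t)$ onto $\mathrm{Crit}(t\gamma z)$: the first block forces $x\in\Var(f-t\gamma z)$, while the remaining two blocks say that $(x-y,\grad f_1(x)^T,\dots,\grad f_{N-d}(x)^T)$ is linearly dependent with a nonzero coefficient vector $\lambda$; since the constant $t\gamma z$ leaves the Jacobian of $f$ unchanged, this puts $x$ among the $\rho_y$-critical points of $\Var(f-t\gamma z)_{\rm reg}$ when $\lambda_0\neq0$ and among the singular points of $\Var(f-t\gamma z)$ when $\lambda_0=0$. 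Thus $\pi(S)=\mathrm{Crit}(\gamma z)$ and $E_1\subset\mathrm{Crit}(0)$.

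First I would install the Puiseux-series dictionary. The parameters $z,\gamma,\alpha$ are necessarily generic, as items \ref{item:Main1V}--\ref{item:Main3V} fail otherwise; so, its proof being valid over any algebraically closed field of characteristic zero, Lemma~\ref{lemma:PerturbF} shows that $\Var(f-\epsilon\gamma z)$ is a smooth $d$-dimensional variety over $\bC\langle\epsilon\rangle$. Hence every solution of $H(\cdot,\cdot,\epsilon)=0$ has $\lambda_0\neq0$ and is a genuine critical point of $\rho_y$ on this smooth deformation; by items \ref{item:Main1V}--\ref{item:Main2V} there are exactly $\mathfrak d$ of them, all nonsingular, where $\mathfrak d$ is the generic solution count, and by item \ref{item:Main3V} these are precisely the branches swept out by the paths $\xi_s$. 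Since each branch is an algebraic arc in $(x,\lambda,t)$, near $t=0$ it is a Puiseux series in $t$, which converges exactly when it is bounded, the limit then being the $\lim_0$ of the branch. Using item \ref{item:Main4V} to pass between the full paths and the $x$-paths, this identifies $E_1$ with the $\pi$-images of the $\lim_0$'s of the bounded critical points of $\rho_y$ on the infinitesimally deformed variety $\Var(f-\epsilon\gamma z)$ --- that is, $E_1$ computes exactly the set produced by the Rouillier--Roy--Safey El Din deformation method.

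The real-geometry step then follows \cite{RealSolving,Real}. Fix a connected component $C$ of $\Var_\bR(f)$ with $C\subset V$. As $C$ is closed in the Euclidean topology on $\bR^N$ (Section~\ref{Sec:ConnectedComponents}) and $\rho_y$ is continuous and proper, $\rho_y|_C$ attains its minimum at some $x_C\in C$, and the claim is that $x_C\in E_1$. If $x_C$ is a smooth point of $\Var(f)$ at which the Jacobian of $f$ has full rank, then $x_C$ is a critical point of $\rho_y$ on $\Var(f)_{\rm reg}$, nondegenerate for the generic $y$ to which the hypotheses confine us, so $(x_C,\lambda_C)$ is a nonsingular isolated zero of $H(\cdot,\cdot,0)$; the implicit function theorem produces a holomorphic branch $(x_C(w),\lambda_C(w))\to(x_C,\lambda_C)$ as $w\to0$, whose restriction to $w=t\gamma z$ is a bounded branch of $H(\cdot,\cdot,t)=0$ with endpoint projecting to $x_C$. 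By the maximality in item \ref{item:Main2V} every solution branch over $t\in(0,1]$ meets $S$ at $t=1$, so this branch is one of the $\xi_s$, giving $x_C\in E_1$. If instead $x_C$ is a singular point of $\Var(f)$, one runs the perturbation analysis of \cite{RealSolving,Real}: smoothing $\Var(f)$ near $x_C$ to $\Var(f-\epsilon\gamma z)$ and tracking the critical points of $\rho_y$ on the smoothing, one shows at least one such branch is bounded with $\lim_0=x_C$ (the prototypical case being the complex conic $\{u^2+v^2=c\}$, the local deformation of a node, whose $\rho_y$-critical point near the origin collapses to the origin as $c\to0$ independently of the argument of $c$). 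Either way $x_C\in C\subset V\cap\bR^N$ and $x_C\in E_1$, so $E_1\cap V\cap\bR^N$ contains the point $x_C$ of $C$; since $C$ was arbitrary, the theorem follows.

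The main obstacle is the singular case above, and specifically the interplay between the \emph{complex} deformation direction $\gamma z$ and the requirement that the limit be the \emph{real} minimizer $x_C$: although $\Var(f-\epsilon\gamma z)$ has no real points, the relevant branch of its $\rho_y$-critical locus must still be bounded with $\lim_0=x_C\in\bR^N$. The resolution is that, to leading order in $\epsilon$, the location of that critical point is governed by the real tangent-cone data of $\Var(f)$ at $x_C$ and by the real point $y$, so it does not depend on the argument of $\gamma$; making this precise --- via Puiseux expansions, or the curve selection lemma over $\bR\langle\epsilon\rangle$ and $\bC\langle\epsilon\rangle$ --- and checking that the resulting branch connects to a point of $S$, so that it is genuinely tracked, is where the work lies. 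The coefficient-parameter reasoning underlying item \ref{item:Main2V} and the bookkeeping in items \ref{item:Main3V}--\ref{item:Main4V} relating the tracked full paths to the $x$-paths and their limits are standard.
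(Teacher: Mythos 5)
Your first two paragraphs reproduce, in substance, the first half of the paper's proof: identifying $E_1$ with $\lim_0$ of the bounded branches of the deformed critical locus is exactly what the paper does by comparing $\sG_{f_\epsilon,y}$ with its patched version $\sG^a_{f_\epsilon,y}$ and invoking items~\ref{item:Main1V}--\ref{item:Main4V} together with coefficient-parameter continuation \cite{CoeffParam}. The genuine gap is the real-geometry half. For a connected component $C$ whose distance minimizer is a singular point of $\Var(f)$ --- the only case in which the theorem says anything beyond standard critical-point arguments, and the case occurring in Example~\ref{ex:Hypersurface}, where $\Var_\bR(f)\subset\Sing(f)$ --- you defer to ``the perturbation analysis of \cite{RealSolving,Real}'' and then concede in your closing paragraph that reconciling the complex deformation direction $\gamma z$ with a real limit ``is where the work lies.'' That is precisely the content that must be supplied, and your sketch (``to leading order the critical point is governed by the real tangent-cone data'') is not the mechanism that works. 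The paper closes this gap with Lemma~\ref{lemma:KeyLemma}: the infinitesimal deformation used for the real argument is $f-\epsilon z$ with \emph{real} $z$ and a real infinitesimal $\epsilon$; item~\ref{item:Key2} of that lemma converts the real points of $\lim_0$ of this complex deformation into $\lim_0$ over $\bR_b\langle\epsilon\rangle$ of the genuinely real variety $\Var(f_1^2-\epsilon^2z_1^2,\dots,f_{N-d}^2-\epsilon^2z_{N-d}^2)$, after which the proof of Lemma~3.7 of \cite{Real} applies (item~\ref{item:Key3}). The complex $\gamma$ never enters this real argument at all: $E_1$ is the set of constant terms of the bounded Puiseux branches in $\epsilon$, and that set is unchanged when the branches are followed along the ray $\epsilon=t\gamma$, $t\rightarrow0^+$, which is how the paper's proof disposes of the very obstacle you flag.

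Your smooth-case argument also overreaches in two ways. First, you assert that $(x_C,\lambda_C)$ is a nonsingular isolated zero of $H(x,\lambda,0)$ because ``the hypotheses confine $y$ to be generic''; but items~\ref{item:Main1V}--\ref{item:Main4V} constrain only the behaviour at $t=1$ and along the paths for $t\in(0,1]$, and say nothing about nondegeneracy at $t=0$ --- the paper explicitly notes that the endpoints in $E$ may be singular solutions of $H(x,\lambda,0)=0$, so the implicit function theorem step is unjustified as stated. Second, you are trying to prove more than is asserted: the theorem (and the deformation argument behind it) produces \emph{some} point of each component $C$ contained in $V$, not necessarily the minimizer $x_C$, which need not be unique or nondegenerate for the fixed real $y$ allowed by the hypotheses. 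Your observation that any bounded solution branch near $t=0$ must be one of the tracked paths $\xi_s$ can be justified by the same coefficient-parameter reasoning the paper uses, but it only becomes useful once the singular-case analysis above is in place; as written, the proof has a genuine hole exactly where the theorem's novelty lies.
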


The homotopy $H$ defined in (\ref{eq:HomotopyV}) is based
on the classical approach of Seidenberg \cite{Seidenberg}.
If $y\in\bR^N\setminus\Var_\bR(f)$, consider the quadratic polynomial
$$d_y(x) = (x-y)^T (x-y) = \sum_{i=1}^N (x_i - y_i)^2$$
and the optimization problem
\begin{enumerate}
\item[(P)] \hspace{1.6in} $\min \left.\left\{d_y(x)~\right|~x\in\Var_\bR(f)\right\}$.
\end{enumerate}
We want to compute the points on $\Var(f)$ for which $\grad d_y(x) = 2(x-y)^T$ and $\grad g(x)$
are linearly dependent.  The approach in \cite{Real} for hypersurfaces
uses determinants to describe this linear dependence condition,
while the approach in Theorem~\ref{thm:MainThmV} uses auxiliary variables $\lambda$.
In particular, the polynomial system $\sG_{f,y}:\bC^N\times\bP^{N-d}\rightarrow\bC^{2N-d}$ defined by
\begin{equation}\label{eq:G}
G_{f,y}(x,\lambda) = \left[\begin{array}{c} f(x) \\
\lambda_0(x-y) + \lambda_1\grad f_1(x)^T + \cdots + \lambda_{N-d}\grad f_{N-d}(x)^T \end{array}\right]
\end{equation}
comprises the Fritz John conditions \cite{John} for problem (P)
and provides necessary conditions for optimality.
That is, if $\xi$ is a local minimizer for problem (P),
then there exists $\lambda\in\bP^{N-d}$ such that $(\xi,\lambda)\in\Var(\sG_{f,y})$.

Clearly, $x\in\Var(f)$ such that
$$\rank \left[\begin{array}{cccc} x-y & \grad f_1(x)^T & \cdots & \grad f_{N-d}(x)^T \end{array}\right] \leq N - d$$
if and only if there exists $\lambda\in\bP^{N-d}$ such that $(x,\lambda)\in\Var(\sG_{f,y})$.
A point $x\in\pi(\Var(\sG_{f,y}))$ is called a {\em critical point} of the distance function
with respect to $f$, where $\pi(x,\lambda) = x$.

Consider $\Sing(f) = \{x\in\bC^N~|~\rank Jf(x) < N - d\}$,
where $Jf(x)$ is the Jacobian matrix of $f$ evaluated at $x$.
If $\Sing(f)$ is positive dimensional, then $\Var(\sG_{f,y})$ is also positive dimensional.
By using Lemma~\ref{lemma:PerturbF}, we can consider smooth algebraic sets
thereby allowing the computation of finitely many points
in $\Var_\bR(f)$ containing the points of interest.

The following lemma will be used to complete the proof of Theorem~\ref{thm:MainThmV}

\begin{lemma}\label{lemma:KeyLemma}
Suppose that Assumption~\ref{assume:Basic} holds.
Let $\epsilon$ be an infinitesimal, $y\in\bR^N\setminus\Var_\bR(f)$,
$z\in\bR^{N-d}$ with $z_i\neq 0$, and $f_\epsilon(x) = f(x) - \epsilon z$
be such that $\Var(\sG_{f_\epsilon,y})$ is finite
and $|\Var(\sG_{f_\epsilon,y})|$ is equal to the maximum number of isolated solutions as
$y$ and $z$ varies over the sets $\bC^N$ and $\bC^{N-d}$, respectively.
Then,
\begin{enumerate}
\item\label{item:Key1} $V\subset\hbox{$\lim_0$}\left(\Var(f_\epsilon)\cap\bC_b\langle\epsilon\rangle^N\right)$,
\item\label{item:Key2} $\hbox{$\lim_0$}\left(\Var(f_\epsilon)\cap \bC_b\langle\epsilon\rangle^N\right)\cap\bR^N =
\hbox{$\lim_0$}\left(\Var(f_1^2 - \epsilon^2 z_1^2, \dots, f_{N-d}^2 - \epsilon^2 z_{N-d}^2)\cap\bR_b\langle\epsilon\rangle^N\right)$, and
\item\label{item:Key3} $\hbox{$\lim_0$}\left(\pi(\Var(\sG_{f_\epsilon,y}))\cap\bC_b\langle\epsilon\rangle^N\right)\cap\bR^N$
contains a point in each connected component of $\Var_\bR(f)$ contained in $V$ where $\pi(x,\lambda) = x$.
\end{enumerate}
\end{lemma}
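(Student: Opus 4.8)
The plan is to treat the three parts in order, using the algebraic Puiseux series field $\bC\langle\epsilon\rangle$ as a real-closed-field analogue of working over $\bC$ with an infinitesimal perturbation. For part~\ref{item:Key1}, I would first invoke Lemma~\ref{lemma:PerturbF}: since $z$ is a generic direction ($z_i \neq 0$ and lying in the Zariski open set $Z$), the set $\Var(f_\epsilon) = \Var(f - \epsilon z)$ is a smooth $d$-dimensional algebraic set over $\bC\langle\epsilon\rangle$. The key point is that every point $v \in V \subset \Var(f)$ lies in the closure of a $d$-dimensional component, and by the standard curve-selection / deformation argument (the perturbed variety $\Var(f_\epsilon)$ specializes to $\Var(f)$ as $\epsilon \to 0$), there is a bounded Puiseux-series point $\tilde v \in \Var(f_\epsilon) \cap \bC_b\langle\epsilon\rangle^N$ with $\lim_0 \tilde v = v$. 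Concretely, one parametrizes a neighborhood of $v$ in $\Var(f)$ and uses the implicit function theorem over $\bC_b\langle\epsilon\rangle$ to solve $f(\tilde v) = \epsilon z$; smoothness from Lemma~\ref{lemma:PerturbF} is what makes the Jacobian invertible. This gives $V \subset \lim_0(\Var(f_\epsilon) \cap \bC_b\langle\epsilon\rangle^N)$.

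For part~\ref{item:Key2}, the containment $\Var(f - \epsilon z) \subseteq \Var(f_1^2 - \epsilon^2 z_1^2, \dots, f_{N-d}^2 - \epsilon^2 z_{N-d}^2)$ is immediate by squaring each coordinate. Conversely, a point in the zero set of the squared system satisfies $f_i(x) = \pm\epsilon z_i$ for each $i$, i.e.\ it lies in $\Var(f - \epsilon \tilde z)$ for some sign vector $\tilde z$ with $|\tilde z_i| = |z_i|$; since each such $\tilde z$ is again a generic direction (the genericity in Lemma~\ref{lemma:PerturbF} only excludes a proper subvariety, and sign flips of a generic point stay generic), Lemma~\ref{lemma:PerturbF} applies to each. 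The claim then reduces to the statement that $\lim_0$ applied to any of these $2^{N-d}$ smooth perturbed varieties, intersected with $\bR^N$, gives the same set — namely $\Var_\bR(f) \cap$ (the relevant component). I would establish this by showing both sides equal the set of real limit points reachable by bounded real Puiseux paths, using that $\lim_0$ of a real bounded point is real and that the real points of $\Var(f)$ are exactly what survives the limit when approached along real arcs; the restriction to $\bR_b\langle\epsilon\rangle^N$ on the right-hand side is the honest real analogue.

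For part~\ref{item:Key3}, I would combine the optimization structure with a compactness/properness argument. Fix a connected component $C$ of $\Var_\bR(f)$ contained in $V$. Working over the real closed field $\bR\langle\epsilon\rangle$, the extended component $\mathrm{ext}(C)$ sits inside $\Var(f_\epsilon)$ up to the sign ambiguity of part~\ref{item:Key2}, and the distance function $d_y$ restricted to it attains its minimum at some point $\xi_\epsilon$ — here one uses that $d_y$ is proper (bounded sublevel sets) so a minimizer exists even over the real closed field, and that $y \notin \Var_\bR(f)$ guarantees the minimum value is a positive, hence the minimizer is a genuine constrained critical point. By the Fritz John conditions~\cite{John}, $(\xi_\epsilon, \lambda)$ lies in $\Var(\sG_{f_\epsilon,y})$ for suitable $\lambda \in \bP^{N-d}$, so $\xi_\epsilon \in \pi(\Var(\sG_{f_\epsilon,y}))$; since $\Var(\sG_{f_\epsilon,y})$ is assumed finite with the maximal (generic) cardinality, $\xi_\epsilon$ has bounded Puiseux coordinates, and $\lim_0 \xi_\epsilon \in C$ because $d_y(\lim_0 \xi_\epsilon) = \lim_0 d_y(\xi_\epsilon) \le d_y(c)$ for every $c \in C$, forcing the limit into $C$ by continuity and connectedness. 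I expect the main obstacle to be part~\ref{item:Key3}: specifically, verifying that the minimizer $\xi_\epsilon$ over $\bR\langle\epsilon\rangle$ genuinely has \emph{bounded} (not infinitesimal-denominator) coordinates and that its $\lim_0$ cannot escape to a different component or to infinity — this requires the properness of $d_y$ together with the generic finiteness hypothesis on $\Var(\sG_{f_\epsilon,y})$, and care is needed because $\Sing(f)$ may be positive-dimensional, which is precisely why the $\epsilon$-perturbation was introduced. The sign ambiguity from part~\ref{item:Key2} must also be tracked: one may need to run the argument for the correct sign vector $\tilde z$ so that $\mathrm{ext}(C)$ actually lies in $\Var(f_\epsilon)$ rather than a sign-variant, but since all variants are generic this does not obstruct the conclusion.
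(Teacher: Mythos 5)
Your overall strategy matches the route the paper takes (the paper gets Item~\ref{item:Key1} from Lemma~\ref{lemma:PerturbF} plus a ``clear'' specialization argument, Item~\ref{item:Key2} from the coordinatewise identity $\lim_0(\Var(g-\epsilon)\cap\bC_b\langle\epsilon\rangle^N)\cap\bR^N=\lim_0(\Var(g^2-\epsilon^2)\cap\bR_b\langle\epsilon\rangle^N)$ using $z_i\in\bR$, and Item~\ref{item:Key3} by rerunning the distance-minimization proof of Lemma~3.7 of \cite{Real} with Items~\ref{item:Key1} and~\ref{item:Key2} in place of Lemma~3.6 of \cite{Real}), but two of your steps have genuine gaps. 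For Item~\ref{item:Key1}, the implicit function theorem justification fails precisely in the situation the lemma is built for: if $V\subset\Sing(f)$, or if $V$ is a component of multiplicity greater than one (think $f=x_1^2$), then $Jf$ is rank-deficient along $V$, there is no implicit-function solution of $f=\epsilon z$ near $v\in V$, and the needed branch is genuinely ramified ($x_1=\sqrt{\epsilon z}$). The smoothness supplied by Lemma~\ref{lemma:PerturbF} concerns points of the perturbed fiber $\Var(f_\epsilon)$ and does not make any Jacobian invertible near $v\in\Var(f)$; likewise $\Var(f_\epsilon)$ does not ``specialize to $\Var(f)$'' --- its flat limit is pure $d$-dimensional and containment of $V$ in it is exactly what must be proved. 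A correct argument uses, e.g., a dimension count on $\{(x,s):f(x)=sz\}$ (every component through a generic point of $V\times\{0\}$ has dimension at least $d+1$, hence is not contained in $\{s=0\}$) together with closedness of the limit set; note Example~\ref{ex:Hypersurface} has $\Var_\bR(f)\subset\Sing(f)$, so this case cannot be set aside.

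For Items~\ref{item:Key2} and~\ref{item:Key3}, your handling of the signs conflates two different objects and never returns to the fixed system in the statement. The left side of Item~\ref{item:Key2} is the set of real limits of \emph{complex} bounded points of the single perturbation $f=\epsilon z$, while the right side is the set of limits of \emph{real} bounded points of the squared system, i.e.\ of $f=\epsilon\tilde z$ over all $2^{N-d}$ sign vectors; your reduction asserts without proof that every sign variant has the same real limit set, and it treats limits of real points and real limits of complex points interchangeably, which they are not (for a fixed variant compare $x^2+y^2=-\epsilon$: no real points, yet the complex branches have the real limit point $0$). This gap propagates into Item~\ref{item:Key3}: your minimizer lives on the real points of $f=\epsilon\tilde z$ for whatever sign vector carries them (also, $\mathrm{ext}(C)$ lies in $\Var(f)$, not in $\Var(f_\epsilon)$, so the minimization must take place on a bounded semialgebraically connected piece of the perturbed real variety furnished by Items~\ref{item:Key1} and~\ref{item:Key2}), hence it is a Fritz John point of $\sG_{f-\epsilon\tilde z,y}$; but the conclusion, and the homotopy of Theorem~\ref{thm:MainThmV}, concern $\sG_{f_\epsilon,y}$ with the given $z$, and ``all variants are generic'' does not place your point in $\lim_0\bigl(\pi(\Var(\sG_{f_\epsilon,y}))\cap\bC_b\langle\epsilon\rangle^N\bigr)\cap\bR^N$ --- bridging the sign variants back to the fixed $z$ is exactly the role Item~\ref{item:Key2} plays when the proof of Lemma~3.7 of \cite{Real} is adapted. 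Finally, the step ``$d_y(\lim_0\xi_\epsilon)\le d_y(c)$ for all $c\in C$ forces the limit into $C$'' is a non sequitur: a different, closer connected component satisfies that inequality just as well; what actually forces membership in $C$ is that $\lim_0$ of the bounded connected perturbed component is connected, lies in $\Var_\bR(f)$, and meets $C$.
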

\begin{proof}
This setup implies that $\Var(f_\epsilon)$ is a $d$-dimensional smooth algebraic set
for which we clearly have $V\subset\lim_0(\Var(f_\epsilon)\cap\bC_b\langle\epsilon\rangle^N)$
yielding Item~\ref{item:Key1}.
Item~\ref{item:Key2} follows from the fact that $z_i\in\bR$ and
$$\hbox{$\lim_0$}\left(\Var(g - \epsilon)\cap\bC_b\langle\epsilon\rangle^N\right)\cap\bR^N =
\hbox{$\lim_0$}\left(\Var(g^2 - \epsilon^2)\cap\bR_b\langle\epsilon\rangle^N\right)$$
for any polynomial $g:\bR^N\rightarrow\bR$.
Item~\ref{item:Key3} follows by using the same proof as Lemma~3.7 in \cite{Real} with
the replacement of Lemma~3.6 of \cite{Real} with Items~\ref{item:Key1} and~\ref{item:Key2}.
\end{proof}

Before we prove Theorem~\ref{thm:MainThmV}, we note
that the polynomial system $\sG_{g,y}(x,\lambda)$ defined in (\ref{eq:G}),
has $\lambda\in\bP^{N-d}$.  The polynomial system $H(x,\lambda,0)$ defined in
(\ref{eq:HomotopyV}) has $\lambda\in\bC^{N-d+1}$ restricted to the Euclidean patch
defined by $\alpha_0\lambda_0 + \cdots + \alpha_{N-d}\lambda_{N-d} = 1$.
Item~\ref{item:Main3V} in Theorem~\ref{thm:MainThmV} enforces that this Euclidean
patch is in general position with respect to the finitely many solution paths.
Therefore, we can use the results of Lemma~\ref{lemma:KeyLemma} in the following
proof of Theorem~\ref{thm:MainThmV}.

\bigskip

\begin{proofThmMainV}
Let $\epsilon$ be an infinitesimal and $f_\epsilon = f - \epsilon z$.
Item~\ref{item:Main2V} yields that $|S| = |\Var(\sG_{f_\epsilon,y})| < \infty$.
The result will follow from Lemma~\ref{lemma:KeyLemma} upon showing
\begin{equation}\label{eq:E1lim}
E_1 = \hbox{$\lim_0$}\left(\pi(\Var(\sG_{f_\epsilon,y}))\cap\bC_b\langle\epsilon\rangle^N\right).
\end{equation}
We will deduce (\ref{eq:E1lim}) by comparing the polynomial systems $\sG_{f_\epsilon,y}$ and
$$\sG^a_{f_\epsilon,y}(x,\lambda) = \left[\begin{array}{c} f(x) - \epsilon z \\
  \lambda_0(x-y) + \lambda_1\grad f_1(x)^T + \cdots + \lambda_{N-d}\grad f_{N-d}(x)^T \\
  \alpha_0 \lambda_0 + \cdots + \alpha_{N-d} \lambda_{N-d} - 1 \end{array}\right].$$
Item~\ref{item:Main2V} also yields that
$|S| = |\Var(\sG_{f_\epsilon,y})| = |\Var(\sG^a_{f_\epsilon,y})|$.
In particular, by abuse of notation regarding $\pi$, we have
\begin{equation}\label{eq:MG}
\pi(\Var(\sG_{f_\epsilon,y})) = \pi(\Var(\sG^a_{f_\epsilon,y})).
\end{equation}
Since there are finitely many homotopy paths,
there exists $0 < t_0 < 1$ such that all of the homotopy paths for $H$
with $0 < t < 2 t_0$ are described by the points in
$\Var(\sG^a_{f_\epsilon,y})\subset\bC\langle\epsilon\rangle^{2N-d}$
by replacing $\epsilon$ with $t \gamma$.
This yields that the set of limit points of the homotopy $H_0(x,\lambda,t) := H(x,\lambda,(1-t)\cdot t_0)$
starting at the roots of $H(x,\lambda,t_0)$ is
$$T = \hbox{$\lim_0$}\left(\Var(\sG^a_{f_\epsilon,y})\cap\bC_b\langle\epsilon\rangle^{N+2}\right).$$
Since, by Items~\ref{item:Main1V} and~\ref{item:Main3V}, the homotopy paths of $H$ are nonsingular for $t\in(0,1]$,
coefficient-parameter continuation \cite{CoeffParam} yields that $T = E$.
Item~\ref{item:Main4V} yields
$$\pi(E) = \pi\left(\hbox{$\lim_0$}\left(\Var(\sG^a_{f_\epsilon,y})\cap\bC_b\langle\epsilon\rangle^{N+2}\right)\right)
= \hbox{$\lim_0$}\left(\pi(\Var(\sG^a_{f_\epsilon,y}))\cap\bC_b\langle\epsilon\rangle^N\right) = E_1.$$
This equation together with (\ref{eq:MG}) yields (\ref{eq:E1lim}).
\end{proofThmMainV}

We note that in the hypersurface case, that is $n = N - d = 1$, if
$f$ has degree $2k$, the 2-homogeneous B\'ezout count yields that
$$|S| \leq K(N,2k):= N \cdot 2k \cdot (2k - 1)^{N-1}.$$
In particular, $\Var_\bR(f)$ can have at most $K(N,2k)$ connected components and
hence $K(N,2k)$ bounds the number of real roots of $f$ that are isolated over $\bR$.
This bound is only $N$ times larger than the bound obtained in \cite[Prop.~11.5.2]{BCR}.

\subsection{An algorithm}\label{Sec:Alg1}

Theorem~\ref{thm:MainThmV} yields an approach for computing a point on
each connected component of $\Var_\bR(f)$.  Before presenting
an algorithm which implements the ideas of this theorem, we state two remarks.
First, Item~\ref{item:Main2V} of Theorem~\ref{thm:MainThmV}
holds for a nonempty Zariski open set of $\bC^{N-d}\times\bC\times\bC^N\times\bC^{N-d+1}$.
The following algorithm assumes that the given point $(z,\gamma,y,\alpha)$
lies in this Zariski open set.  As part of the procedure, it computationally verifies
Items~\ref{item:Main1V},~\ref{item:Main3V}, and~\ref{item:Main4V} of Theorem~\ref{thm:MainThmV} hold.
Second, the use of $\gamma$ is based on the ``Gamma Trick'' \cite[Lemma~7.1.3]{SW05}
first introduced by Morgan and Sommese \cite{MS87}.

Second, since there exist many suitable methods to compute the start points $S$,
the following algorithm does not directly specify which one to utilize.
Nonetheless, to improve efficiency in this computation, the method
should, in some way, utilize the natural 2-homogeneous structure.

\begin{description}
  \item[Procedure \hbox{\rm $[v,R]= $} \Real\hbox{$(f,\sW,z,\gamma,y,\alpha)$}]
  \item[Input] A polynomial system $f:\bR^N\rightarrow\bR^{N-d}$, a witness set $\sW$ for a pure $d$-dimensional algebraic set $V\subset\Var(f)$,
      $z\in\bR^{N-d}$, $\gamma\in\bC$, $y\in\bR^N\setminus\Var_\bR(f)$, and $\alpha\in\bC^{N-d+1}$ such
      that Item~\ref{item:Main2V} of Theorem~\ref{thm:MainThmV} holds.
  \item[Output] A boolean $v$ which is $true$ if Items~\ref{item:Main1V},~\ref{item:Main3V}, and~\ref{item:Main4V} in
    Theorem~\ref{thm:MainThmV} have been computationally verified, otherwise $false$. If $v$ is $true$, $R$ is a finite subset of $\bR^N$
    containing a point on each connected component of the real algebraic set $\Var_\bR(f)$ contained in $V$.
  \item[Begin]\
\begin{enumerate}
 \item Construct the homotopy $H$ defined in (\ref{eq:HomotopyV}).
 \item\label{Item:2} Compute the solutions $S$ of $H(x,\lambda,1) = 0$.
 \begin{enumerate}
   \item Use $S$ to verify that Item~\ref{item:Main1V} of Theorem~\ref{thm:MainThmV} holds.  If it does not hold, {\bf Return}~$[false,\emptyset]$.
 \end{enumerate}
 \item\label{Item:3} Track the solution paths of $H$ starting at each point in $S$ to compute the sets $E$ and $E_1$ defined in Theorem~\ref{thm:MainThmV}.
 \begin{enumerate}
   \item If the tracking fails for a path or $\pi(E) \neq E_1$ where $\pi(x,\lambda) = x$, {\bf Return}~$[false,\emptyset]$.
 \end{enumerate}
 \item\label{Item:4} Use the homotopy membership test to compute the set $R$ consisting of the points in $E_1\cap\bR^N$ contained in $V$.
\end{enumerate}
  \item[Return \hbox{\rm $[true,R]$.}]
\end{description}

Since the endpoints $E$ computed in Step~\ref{Item:3} may be singular solutions of $H(x,\lambda,0) = 0$,
the use of an endgame, e.g., \cite{Parallel,PolyEndgame,PowerSeries,CauchyPoly},
together with adaptive precision tracking \cite{AMP3,AMP1,AMP2} may be required
to accurately compute them.  Also, Step~\ref{Item:3} should use the method of \cite{TransformInf}
to avoid infinite length paths.

\begin{example}\label{ex:Hypersurface}
To illustrate the algorithm for a hypersurface,
consider the polynomial $f(x_1,x_2,x_3) = (x_1+x_3)^2 + x_2^2$
with $V = \Var(f)$.  Clearly, $\Var_\bR(f) = \{(a,0,-a)~|~a\in\bR\}\subset\Sing(f)$.
Item~\ref{item:Main2V} holds with
$z = 1$, $\gamma = 2+3i$,
$$y = \left[\begin{array}{c} 3/8 \\ 5/9 \\ 1/3 \end{array}\right] \hbox{~~and~~} \alpha = \left[\begin{array}{c} 1/2-i/5 \\ 6/7+2i/3 \end{array}\right],
\hbox{~~where~~} i = \sqrt{-1}.$$
Let $H$ be the homotopy defined by (\ref{eq:HomotopyV}).
\begin{itemize}
\item For Step~\ref{Item:2}, we used a standard 2-homogeneous homotopy,
which required tracking $K(3,2) = 6$ paths, to compute the set $S$ consisting of the four nonsingular solutions of $H(x,\lambda,1) = 0$.
\item The four paths tracked in Step~\ref{Item:3}, which started at the points in $S$,
all converged with the endpoints of the two paths ending at the real point coinciding.
In particular, $E$ and $E_1$ both consist of three points with $\pi(E) = E_1$ where
$$E_1 = \left\{\begin{array}{c} (1/48, 0, -1/48), (-1/3+5i/9, 10/9+17i/24, -3/8+5i/9), \\
                                (-1/3-5i/9, 10/9-17i/24, -3/8-5i/9) \end{array}\right\}.$$
\item Since $V = \Var(f)$, we have $R = E_1\cap\bR^N = \{(1/48,0,-1/48)\}$.
\end{itemize}
It is easy to verify that the point $(1/48, 0, -1/48)$
is the minimizer of the distance between the point $y$ and $\Var_\bR(f)$,
as shown in Figure~\ref{Fig:Hypersurf}.

\begin{figure}[ht]
\centering
\includegraphics[width=5in,height=2in,angle=0]{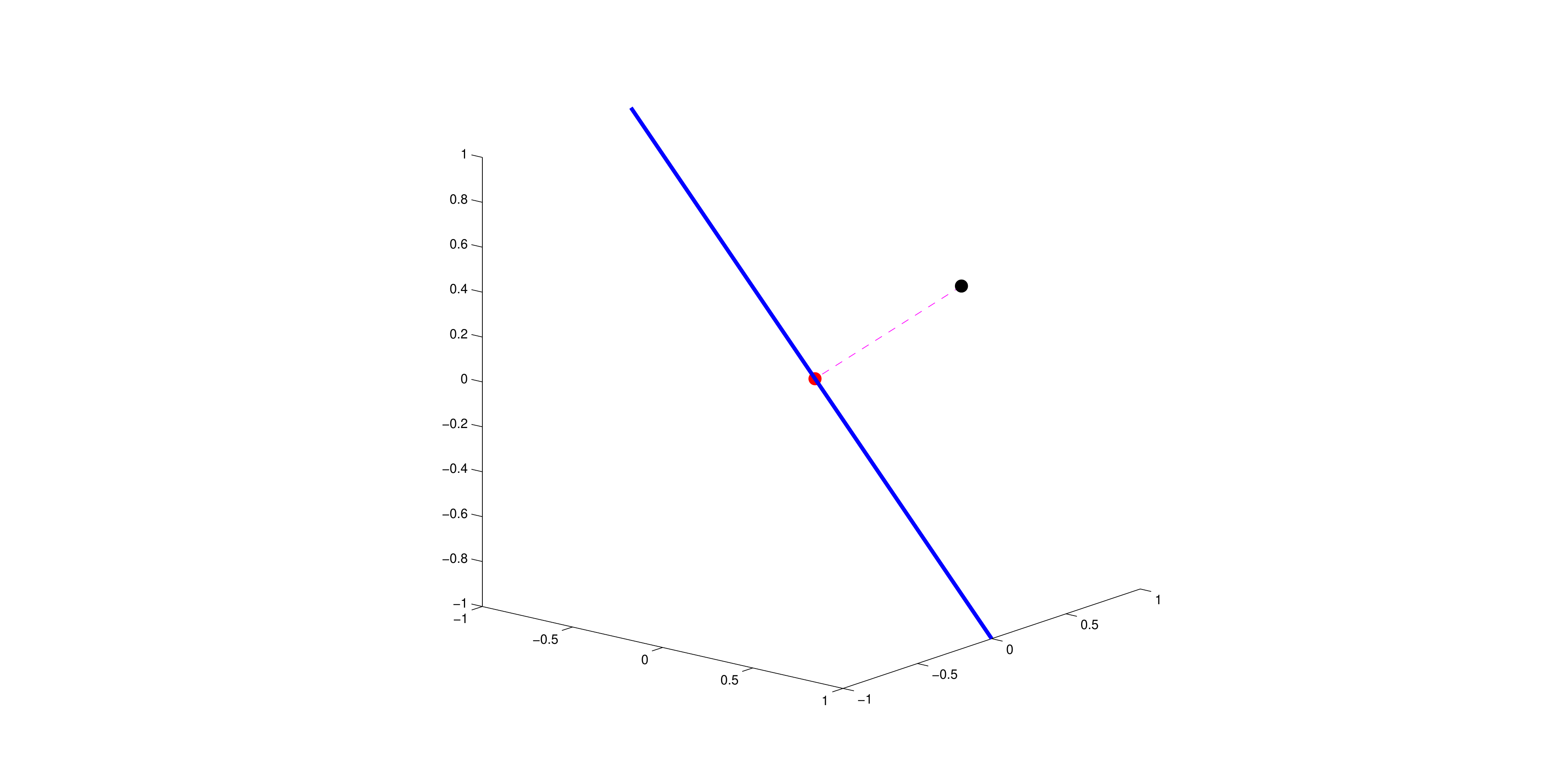}
\caption{Plot of $\Var_\bR(f)$ and the point minimizing the distance between $y$ and $\Var_\bR(f)$}
\label{Fig:Hypersurf}
\end{figure}
\end{example}

\begin{example}\label{ex:AlgSet}
To illustrate the algorithm for an algebraic set, consider the polynomial system
$$f(x) = \left[\begin{array}{c} g_1(x) + r_1 g_3(x) \\ g_2(x) + r_2 g_3(x) \end{array}\right]
\hbox{~~where~~}g(x_1,x_2,x_3) =
\left[\begin{array}{c} (x_1^2 - x_2)(x_1^2 + x_2^2 + x_3^2-1)(x_1 - 1) \\
(x_1x_2 - x_3)(x_1^2 + x_2^2 + x_3^2-1)(x_2 - 2) \\
(x_1x_3 - x_2^2)(x_1^2 + x_2^2 + x_3^2-1)(x_3 - 3) \end{array}\right]$$
with $r_1 = 1/3$ and $r_2 = 1/7$.
We want to investigate real points of the 
cubic curve $V = \{(x_1,x_1^2,x_1^3)~|~x_1\in\bC\}\subset\Var(f)$
which is input into \Real~via a witness set $\sW$.
Item~\ref{item:Main2V} holds with
$$z = \left[\begin{array}{c} 1/5 \\ 1/9 \end{array}\right],~~\gamma = 3/11-i/13,~~
y = \left[\begin{array}{c} 1/4 \\ 1/6 \\ -3/2 \end{array}\right], \hbox{~~and~~}
\alpha = \left[\begin{array}{c} 1/3-i/7 \\ 6/11+3i/4 \\ 2/3-7i/8 \end{array}\right]
\hbox{~~where~} i = \sqrt{-1}.$$

Let $H$ be the homotopy defined by (\ref{eq:HomotopyV}).
\begin{itemize}
\item We used a standard 2-homogeneous homotopy, which required tracking $300$ paths, 
    to compute the set $S$ in Step~\ref{Item:2} consisting of the $95$ nonsingular solutions of $H(x,\lambda,1) = 0$.
\item All 95 of the paths tracked in Step~\ref{Item:3} starting from the points in $S$ converged
    with the set $E_1\cap\bR^N$ consisting of $15$ points.
\item The homotopy membership test yields that $7$ of these $15$ points lie on $V$.  
\end{itemize}
The point of minimum distance on $V\cap\bR^N$ to $y$ is 
approximately $(0.168, 0.028, 0.005)$, which is displayed
in Figure~\ref{Fig:Cubic} with the other $6$ points on $V$.

\begin{figure}[ht]
\centering
\includegraphics[width=2.5in,angle=0,scale=2.5]{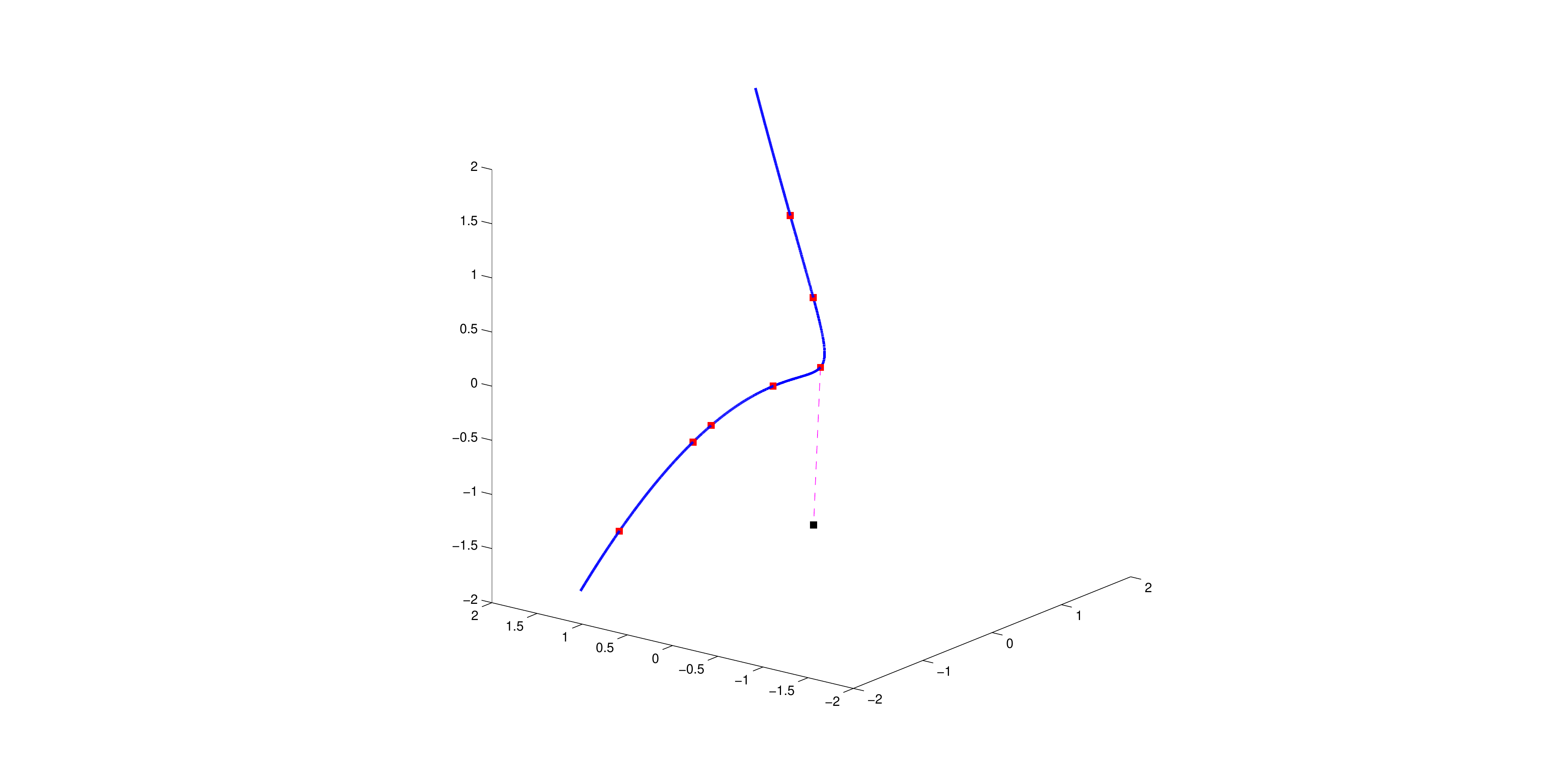}
\caption{Plot of $V\cap\bR^N$ and the point minimizing the distance between $y$ and $V\cap\bR^N$}
\label{Fig:Cubic}
\end{figure}
\end{example}

\section{Examples}\label{Sec:Examples}

The following examples were run using the software package Bertini v1.3.1 \cite{BHSW06}
on a server having four 2.3 GHz Opteron 6176 processors and 64 GB of memory
that runs 64-bit Linux.  The serial examples used one core while the parallel
examples used one manager and 47 working cores.
For the nonsingular solutions, we utilized
alphaCertified \cite{alphaCertifiedPaper,alphaCertified}
to certify reality.  For the singular solutions, we
determined reality based upon the size of the imaginary parts
using two different numerical approximations of the point.

\subsection{Hypersurface example}\label{Ex:Hypersurface}

Consider the polynomial provided in Example~5 of \cite{Real}, namely
$$
\begin{array}{ccl}
f(u_2,u_3,u_4,u_5) &=& 110u_5^2u_4u_3+190u_5u_4^2u_3+80u_4^3u_3+80u_5^2u_3^2+270u_5u_4u_3^2+160u_4^2u_3^2 \\
                   & & +80u_5u_3^3+80u_4u_3^3-32u_4u_3^2u_2-32u_3^3u_2-80u_5^2u_2^2-128u_5u_4u_2^2\\
                   & & -160u_5u_3u_2^2-112u_4u_3u_2^2-64u_3^2u_2^2-80u_5u_2^3-32u_3u_2^3+60u_5^2u_4\\
                   & & +220u_5u_4^2+160u_4^3+67u_5u_4u_3+136u_4^2u_3-24u_5u_3^2-88u_4u_3^2-64u_3^3\\
                   & & -100u_5^2u_2+32u_5u_4u_2+96u_4^2u_2-228u_5u_3u_2-108u_4u_3u_2-120u_3^2u_2\\
                   & & +20u_5u_2^2+96u_4u_2^2-56u_3u_2^2+110u_5u_4+80u_4^2+48u_4u_3-32u_3^2\\
                   & & +30u_5u_2+48u_4u_2-20u_3u_2. \end{array}
$$
The approach of \cite{Real} computes 26 real points on the hypersurface
which contains at least one point on each connected component of $\Var_\bR(f)$
using $y = 0$.  Since $y = 0$ does not satisfy the hypotheses of 
Theorem~\ref{thm:MainThmV}, we used $y = \left[\begin{array}{cccc} 4/3 & -9/5 & -5/7 & 8/9\end{array}\right]^T$
to compute at least one point on each connected component of $\Var_\bR(f)$.
In particular, we used serial processing with \Real~taking $V = \Var(f)$,
$z = 1$, and $\gamma\in\bC$ and $\alpha\in\bC^2$ to be random of unit length.

Let $H$ be the homotopy defined by (\ref{eq:HomotopyV}).
\begin{itemize}
\item For Step~\ref{Item:2}, we used a 2-homogeneous regeneration \cite{Regen}
    to compute the set $S$ consisting of the 151 nonsingular solutions of $H(x,\lambda,1) = 0$ in $10$ seconds.
\item For Step~\ref{Item:3}, each of the 151 paths converged with the set $E_1\cap\bR^N$ consisting
    of $28$ distinct points, which was computed in $4$ seconds.  
\item Since $V = \Var(f)$, $R = E_1\cap\bR^N$ which consists of $28$ points.
\end{itemize}

We note that since $|\Sing(f)| < \infty$, we could directly compute $\Var(H(x,\lambda,0))$
using a standard 2-homogeneous homotopy, which requires the tracking of $432$ paths.  
Bertini performed this computation in serial in 27 seconds which yielded
the same set $R$ of $28$ real critical points, as required by theory \cite{CoeffParam}.

\subsection{An example from filter banks}\label{Ex:FilterBanks}

Consider the polynomial system named F633 \cite{Wavelets}
that was considered in \cite{RealSolving}, which is available at \cite{GINV}.
This polynomial system consists of 9 polynomials in 10 variables.
Since two of the polynomials are linear and linearly independent,
we utilized intrinsic coordinates to reduce the number of variables
to 8 and the number of polynomials to 7, all of which are bilinear.
Since these 7 polynomials are not independent, we further reduced down
to a system of 6 bilinear polynomials in 8 variables, namely 
$$f(u_3,\dots,u_6,U_3,\dots,U_6) = 
\left[\begin{array}{c}
g(u_3,\dots,u_6,U_3,\dots,U_6) \\
g(U_3,\dots,U_6,u_3,\dots,u_6) \\
u_3 U_3 - 1 \\
u_4 U_4 - 1 \\
u_5 U_5 - 1 \\
u_6 U_6 - 1
\end{array}\right]$$
where
$$g(x_1,\dots,x_4,y_1,\dots,y_4) = 8(x_1y_2 + x_1y_3 + x_2y_3 + x_1y_4 + x_2y_4 + x_3y_4) + 4(x_1 + x_2 + x_3 + x_4) + 13.$$
The algebraic set $\Var(f)$ is an irreducible surface of degree $32$.
We used \Real~to compute a set of points containing a point from each connected component of $\Var_\bR(f)$ by taking
$$y = \left[\begin{array}{cccccccc} 1/5 & -3/4 & -2/3 & 7/9 & -4/7 & 12/13 & 1/2 & -10/11 \end{array}\right]^T,$$
$z\in\bR^6$, $\gamma\in\bC$, and $\alpha\in\bC^7$ to be random of unit length, and $\sW$ a witness set for $V = \Var(f)$.
Let $H$ be the homotopy defined by (\ref{eq:HomotopyV}).
\begin{itemize}
\item For Step~\ref{Item:2}, we used a standard 2-homogeneous homotopy, which required tracking $1792$ paths, 
    to compute the set $S$ consisting of the 274 nonsingular solutions of $H(x,\lambda,1) = 0$.  
    This computation took $120$ seconds in serial ($4$ seconds in parallel).
\item For Step~\ref{Item:3}, each of the 274 paths converged with the set $E_1\cap\bR^N$ consisting
    of $36$ distinct points.  This computation took one second in serial.
\item Since $V = \Var(f)$, $R = E_1\cap\bR^N$ which consists of $36$ points.
\end{itemize}

In Step~\ref{Item:2}, we could have used a 3-homogeneous homotopy since
the system itself is naturally 2-homogeneous.  However, this would increase the 
number of paths from $1792$ to $1960$.  Also, since $\Sing(f) = \emptyset$,
we could directly compute $\Var(H(x,\lambda,0))$ using a standard 2-homogeneous homotopy, 
which requires the tracking of $1792$ paths.
Bertini performed this computation in serial in $120$ seconds
yielding the same set $R$ of 36 real critical points, as required by theory \cite{CoeffParam}.

\subsection{A cubic-centered 12-bar linkage}\label{Ex:12bar}

Consider the 12-bar spherical linkage obtained by
locking the scissors of the collapsible cube
with 12 scissors linkages
presented in \cite{WamplerLarsonErdman:NewMobility:DETC:2007},
which is displayed in Figure~3 of \cite{MechanismMobility}.
Following the setup in \cite{MechanismMobility}, we will consider
the cube with side length 2 where we fix the
center at the origin and two adjacent vertices, say
$P_7 = (-1,1,-1)$ and $P_8 = (-1,-1,-1)$.
Let $P_1,\dots,P_6$ denote the position of the other
6 vertices yielding 18 variables.
The constraints on these vertices is that they must
maintain their initial relative distances yielding
a polynomial system $f$ consisting of the following $17$ polynomials:
\begin{align*}\label{eq:Cubic12bar}
    g_{ij} &= |P_i-P_j|^2 - 4,\nonumber\\
           &\qquad \{i,j\}\in\{(1,2),(3,4),(5,6),(1,5),(2,6),(3,7),(4,8),(1,3),(2,4),(5,7),(6,8)\};\\
    h_i &= |P_i|^2 - 3, \qquad i\in\{1,2,3,4,5,6\}.
\end{align*}
The algebraic set $\Var(f)$ consists of $8$, $34$, and $2$ irreducible components of 
dimension $1$, $2$, and $3$, respectively.  
Table~\ref{Tab:12bar} presents the degrees of these components.  
Let $V$ be the union of the one-dimensional irreducible components of $\Var(f)$,
which has degree $36$, and $\sW$ be a witness set for $V$.
Let $C_1,\dots,C_6$ denote the six irreducible curves of degree $4$ contained in $V$,
and $C_7$ and $C_8$ denote the two irreducible curves of degree $6$ contained in $V$.
The components $C_1,\dots,C_6$ are self-conjugate while $C_7$ and $C_8$ are conjugates of
each other.  That is, $C_7\cup C_8$ contains only finitely many real points 
which must be contained in $C_7\cap C_8$.  

\begin{table}
  \centering
  \begin{tabular}{|c|c|c|}
    \hline
    dimension & degree & \# components \\
    \hline
    3 & 8 & 2 \\
    \hline
    \multirow{6}{*}{2} & 4 & 2 \\
    \cline{2-3}
          & 8 & 14 \\
    \cline{2-3}
          & 12 & 12 \\
    \cline{2-3}
          & 16 & 1 \\
    \cline{2-3}
          & 20 & 4 \\
    \cline{2-3}
          & 24 & 1 \\
    \hline
    \multirow{2}{*}{1} & 4 & 6 \\
    \cline{2-3}
          & 6 & 2 \\
    \hline
  \end{tabular}
  \caption{Irreducible decomposition of $\Var(f)$}\label{Tab:12bar}
\end{table}

We used \Real~to compute a finite set of points containing a point on each connected component of $\Var_\bR(f)$ 
contained in $V$ by taking
$$
\begin{array}{rcl}
y & = & [0.142, 0.319, -0.286, -0.167, 0.276, 0.238, 0.217, -0.268, -0.089, \\ & & ~~~~~~~~~~~-0.198, 0.287, -0.042, -0.243, 0.119, 0.309, -0.312, 0.305, 0.162 ]^T,
\end{array}
$$
$z\in\bR^{17}$, $\gamma\in\bC$, and $\alpha\in\bC^{18}$ to be random of unit length.
Let $H$ be the homotopy defined by~(\ref{eq:HomotopyV}).
\begin{itemize}
\item For Step~\ref{Item:2}, we computed $S$ using a diagonal homotopy \cite{SVW04}
    by computing $A\cap B$ where $A = \Var(f - \gamma z) \times \bC^{18}$ and
    $$B = \Var(\lambda_0(x-y) + \lambda_1 \grad f_1(x)^T + \cdots + \lambda_{17} \grad f_{17}(x)^T,
    \alpha_0 \lambda_0 + \cdots + \alpha_{17} \lambda_{17} - 1).$$
    Since $\Var(f - \gamma z)$ is a curve of degree 480 and $B_{\sL} = B\cap\left(\sL \times \bC^{18}\right)$,
    where $\sL$ is a random line in $\bC^{18}$, consists of 13 points, the diagonal homotopy required tracking
    $480\cdot 13 = 6240$ paths, which yielded the $1536$ points in $S$.
    A witness set for $\Var(f-\gamma z)$ was computed using regeneration \cite{Regen} and $B_{\sL}$
    was computed using a standard 2-homogeneous homotopy.  Overall, this computation took $5.5$ minutes in parallel.
\item For Step~\ref{Item:3}, only $1440$ of the $1536$ paths converged and $\pi(E) = E_1$.  This computation took $18.5$ minutes in serial 
    ($26$ seconds in parallel) and found that the set $E_1\cap\bR^N$ consists of $283$ distinct points.
\item For Step~\ref{Item:4}, the homotopy membership test found that $R = V\cap E_1\cap\bR^N$ consists of $24$ points, 
    which took $80$ seconds in serial.
\end{itemize}

The set $R\setminus\Sing(f)$ consists of $16$ points and meets $C_i$ for $i = 1,\dots,6$.
This yields that $C_i\cap\bR^{18}$ is also one dimensional for $i = 1,\dots,6$.
Additionally, two points of $R$ lie in $C_7\cap C_8$,
one of which is presented in Figure~3 of \cite{MechanismMobility}.
Each of the other six points of $R$, which arose from 30 homotopy paths in 
Step~\ref{Item:4}, lies in the intersection of $V$
with some higher-dimensional components of $\Var(f)$.

\section{Conclusion}\label{Sec:Conclusion}

Infinitesimal deformations are widely used in real algebraic geometric algorithms.
By utilizing homotopy continuation to model the deformation, we have demonstrated
that one can obtain an algorithm for computing a finite set of real roots of a 
polynomial system containing a point on each connected component.  In particular, 
this algorithm computes a finite superset of the isolated roots over the real numbers.
This is similar to basic homotopy continuation in that one computes a
finite superset of the isolated roots over the complex numbers.
The isolated complex roots can be identified by, for example,
using the local dimension test of \cite{LocalDim}, but a similar test currently
does not exist over the real numbers.
Nonetheless, since many of the algorithms in numerical algebraic geometry depend
only on the ability to compute a superset of the isolated roots, we will investigate
what other computations can be performed in numerical {\em real} algebraic geometry
building from the algorithm presented here.

\section*{Acknowledgments}
The author would like to thank Mohab Safey El Din,
Charles Wampler, and the anonymous referee for their
helpful comments as well as the Institut Mittag-Leffler (Djursholm, Sweden)
for support and hospitality when working on this article.

\pdfbookmark[0]{References}{thebibliography}

\end{document}